\newcommand{\labeq}[1]{\label{eq:#1}}
\newcommand{\refeq}[1]{(\ref{eq:#1})}
\newcommand{\labt}[1]{\label{thm:#1}}
\newcommand{\reft}[1]{Theorem ~\ref{thm:#1}}
\newcommand{\labl}[1]{\label{lemma:#1}}
\newcommand{\refl}[1]{Lemma ~\ref{lemma:#1}}
\newcommand{\labex}[1]{\label{example:#1}}
\newcommand{\refex}[1]{Example ~\ref{example:#1}}
\newcommand{\labd}[1]{\label{definition:#1}}
\numberwithin{equation}{section} \theoremstyle{plain}
\newtheorem{theorem}[equation]{Theorem}
\newtheorem{lemma}[equation]{Lemma}
\newtheorem{corollary}[equation]{Corollary}
\theoremstyle{remark}
\theoremstyle{definition}
\newtheorem{definition}[equation]{Definition}
\newtheorem{example}[equation]{Example}
\newcommand\A{\mathbf{A}}
\newcommand\B{\mathbf{B}}
\newcommand\U{\mathbf{u}}
\newcommand\V{\mathbf{v}}
\newcommand\C{\mathbb{C}}
\newcommand\D{\mathbb{D}}
\newcommand\Cz{\mathbb{C}\lbrack {\bf z}]}
\newcommand\X{\mathscr{X}}
\newcommand\BX{\mathscr{B}(\X)}
\newcommand\Lu{\mathcal{L}_\A(\U)}
\newcommand\Lv{\mathcal{L}_\B(\V)}
\newcommand\s{\mathfrak{L}}
\begin{document}

\markright{Sherwin Kouchekian and Boris Shekhtman}
\title[On simultaneous similarity of families of commuting operators ]{On simultaneous similarity of families of commuting operators}
\author[S. Kouchekian and  B. Shekhtman]{Sherwin Kouchekian and  Boris Shekhtman}
\address{Sherwin Kouchekian\\Department of Mathematics \& Statistics\\
     University of South Florida\\
        Tampa, FL  33620-5700}
\email{skouchekian@usf.edu}
\address{Boris Shekhtman\\ Department of Mathematics\\
     University of South Florida\\
        Tampa, FL  33620-5700}
\email{shekhtma@usf.edu}
\thanks{2000 {\em Mathematics Subject Classification.\/}\ \  Primary:  15A21, 47A16, 47B99;
 \  Secondary:  47L22, 15A30.}

\begin{abstract}
Characterization of simultaneously similarity for commuting $m$-tuples of operators is an open problem even in finite-dimensional spaces; known as ``A wild problem in linear algebra". In this paper we offer a criteria for simultaneously similarity of $m$-tuples of 
$k$-cyclic commuting operators on an arbitrary Banach space. Moreover, we obtain an additional equivalence condition in the case of finite dimensional Banach spaces, which extends the result found in \cite{BS13} for pairs of cyclic commuting matrices. We also present two applications of our results, one in the case of general multiplication operators on Banach spaces of analytic function, and one for  $m$-tuples of commuting square matrices.
\end{abstract}

\maketitle

\section{\bf  Introduction}
Characterization of simultaneously similarity for pairs of commuting square matrices is a central problem in classifying algebras with wild type representations, see \cite{DF}, \cite{Drozd}, and \cite{HGK}. Gelfand and  Ponomorev \cite{GelPv69} showed that  characterization of
simultaneously similarity for  pairs of commuting square matrices would provide a
characterization of simultaneously similarity for  arbitrary pairs of  square matrices.
Since then, the problem of characterization of simultaneously similarity for  pairs
of commuting square matrices has become known as ``A wild problem in linear algebra". In \cite{BS13},  a necessary and sufficient condition for cyclic pairs of commuting square matrices was given in terms of vanishing ideals of polynomials. In this paper we offer an extension of this result to $k$-cyclic commuting $m$-tuples of  operators on Banach spaces.  We being by introducing some notations.

In what follows,  $(\X, \|\cdot\|)$ will always denote a complex Banach space, and $\BX$  the space of all bounded linear operators from $\X$ into $\X.$ An $m$-tuple  $(A_{1},\ldots ,A_{m})$ is a vector in $\left(\BX\right)^m$ and  is called a {\em commuting $m$-tuple} on $\X$ if  $A_iA_j=A_jA_i$ for all  $i,j=1, \dots, m.$ Two commuting $m$-tuples $(A_{1},\ldots ,A_{m})$ and 
$(B_{1},\ldots , B_{m})$ on $\X,$ denoted by $\A$ and $\B$ respectively, are called {\em simultaneously similar} if there exists an invertible operator $S$ in $\BX$ such that $B_{j}=SA_{j}S^{-1}$ for all $ j=1,\ldots ,m$.  Moreover, we let $\Cz:=\mathbb{C} \lbrack z_{1},...,z_{m}]$ stand for the algebra of polynomials in $m$
variables over the complex field $\C.$ Finally, if   $p({\mathbf z})=p(z_{1},...,z_{m})$ is a polynomial in
$\Cz$ and if  $\A= (A_{1},\ldots ,A_{m})$ belongs to  $\left(\BX\right)^m,$ then by $p(\A)$ we mean the operator $p(A_{1},...,A_{m})$ in $\BX.$

This paper is organized as follows. Section 2 contains our main result, \reft{Main}, which provides  an equivalent condition for simultaneously similarity between pairs of $k$-cyclic $m$-tuples of commuting operators on a Banach space. In Section 3, we show that one can offer an additional useful necessary and sufficient condition to \reft{Main} for finite dimensional case, which is not true in general Banach spaces. The provided simple examples in this section show the stark difference between finite and infinite dimensional Banach spaces. Section 4 concludes our paper with two general applications of \reft{Main} and \reft{Finite}.

\section{\bf  Main Result}
To start with, we need the following definition.
\begin{definition}\labd{kCyc}
For a vector $\U =(u_{1}, \ldots ,u_{k})$  in $\X^k$ and an $m$-tuple  $\A=(A_{1},\ldots ,A_{m})$ in $\left(\BX\right)^m$ define
\begin{equation}\labeq{Lu}
\Lu = \left\{\sum_{j=1}^{k}p_{j}(\A)u_{j}:p_{j}\in \Cz\right\}.
\end{equation}
 If $\Lu$ is dense in $\X,$ we call $\U$  a {\em cyclic $k$-tuple for $\A$,} or $\A$ is $k$-cyclic with respect to the $k$-tuple $\U.$  Moreover if $\U$ is understood,  $\A$ is simply called a {\em $k$-cyclic $m$-tuple}. It is clear that $\Lu$ is a (linear) subspace of $\X,$ and our definition extends the standard definition of cyclicity to an $m$-tuple of operators. 
\end{definition}
We are now in the position to state our main result.
\begin{theorem}\labt{Main}
 Let $\A=(A_{1},\ldots ,A_{m})$ be a commuting $m$-tuple on $\X,$ and suppose $\U=(u_{1},...,u_{k})$ is a cyclic $k$-tuple for $\A$.
A commuting $m$-tuple $\B=(B_{1},\ldots ,B_{m})$ on $\X$ is
simultaneously similar to $\A$ if and only if there exists a cyclic $k$-tuple $\V=(v_{1},...,v_{k})$ for $\B$ and a positive constant $c>0$ such that 
\begin{equation}\labeq{Ineq}
c^{-1}\left\Vert \sum_{j=1}^{k}p_{j}(\B)v_{j}\right\Vert \leq \left\Vert
\sum_{j=1}^{k}p_{j}(\A)u_{j}\right\Vert \leq c\left\Vert
\sum_{j=1}^{k}p_{j}(\B)v_{j}\right\Vert  
\end{equation}%
for all polynomials $p_{j}$ in $\Cz$.
\end{theorem}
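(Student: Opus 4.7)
The necessity direction is straightforward. If $S$ is an invertible intertwiner with $B_i = SA_iS^{-1}$, then by induction on monomials $p(\B) = Sp(\A)S^{-1}$ for every $p \in \Cz$. Setting $v_j := S u_j$ for $j=1,\dots,k$ gives
\[
\sum_{j=1}^k p_j(\B) v_j \;=\; S\Bigl(\sum_{j=1}^k p_j(\A) u_j\Bigr),
\]
so $\V$ inherits cyclicity from $\U$ (the image of a dense set under an invertible bounded operator is dense) and \refeq{Ineq} holds with $c = \max\{\|S\|,\|S^{-1}\|\}$.

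For sufficiency, the plan is to build $S$ directly on the dense subspace $\Lu$ and extend by continuity. Define
\[
S_0 : \Lu \longrightarrow \Lv, \qquad S_0\Bigl(\sum_{j=1}^k p_j(\A) u_j\Bigr) := \sum_{j=1}^k p_j(\B) v_j.
\]
The lower half of \refeq{Ineq} is exactly what is needed to see $S_0$ is well defined: if $\sum p_j(\A) u_j = \sum q_j(\A) u_j$, then $\sum (p_j-q_j)(\A) u_j = 0$, hence by \refeq{Ineq} also $\sum (p_j-q_j)(\B) v_j = 0$. Linearity is immediate, and \refeq{Ineq} reads
\[
c^{-1}\|S_0 x\| \;\le\; \|x\| \;\le\; c\,\|S_0 x\| \qquad (x \in \Lu),
\]
so $S_0$ is bounded and bounded below.

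Since $\Lu$ is dense in $\X$, $S_0$ extends uniquely to a bounded operator $S \in \BX$ with $\|S\|\le c$, and the lower bound $\|Sx\|\ge c^{-1}\|x\|$ persists on all of $\X$ by continuity. Thus $S$ is injective with closed range. But the range of $S$ contains $S_0(\Lu) = \Lv$, which is dense in $\X$ by cyclicity of $\V$; hence $S$ is surjective. The open mapping theorem (or directly, the two-sided bound) then gives $S^{-1}\in \BX$.

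The last step is to verify the intertwining $SA_i = B_i S$ for $i=1,\dots,m$. Commutativity of $\A$ and $\B$ is what makes this clean: for any $x = \sum_{j} p_j(\A)u_j \in \Lu$,
\[
SA_i x \;=\; S\sum_j (z_i p_j)(\A) u_j \;=\; \sum_j (z_i p_j)(\B) v_j \;=\; B_i \sum_j p_j(\B) v_j \;=\; B_i Sx,
\]
so $SA_i$ and $B_iS$ agree on the dense subspace $\Lu$ and hence on $\X$. The only place where one has to be careful is the well-definedness of $S_0$, which relies essentially on the lower inequality in \refeq{Ineq}; everything else is a density-plus-boundedness argument together with the observation that polynomials in commuting operators behave functorially under the correspondence $u_j\leftrightarrow v_j$.
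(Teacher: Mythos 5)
Your proof is correct and follows essentially the same approach as the paper: necessity via $v_j = Su_j$ with $c=\max\{\|S\|,\|S^{-1}\|\}$, and sufficiency by defining the map on $\Lu$, checking well-definedness and two-sided boundedness from \refeq{Ineq}, extending by density, and verifying the intertwining on $\Lu$. The only cosmetic difference is that you obtain surjectivity from ``bounded below implies closed range'' together with density of $\Lv$, whereas the paper runs an explicit Cauchy-sequence argument; the two are equivalent, and your handling of injectivity (extending the lower bound to all of $\X$ by continuity) is in fact slightly cleaner than the paper's.
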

\begin{proof} 
First, suppose that $\A$ and $\B$ are
simultaneously similar. Thus, there exists an invertible operator $S$ in $\BX$ such that 
$B_{j}=SA_{j}S^{-1}$ for all $ j=1,\ldots ,m$.  Define $v_{j}=Su_{j}$ for $j=1, \dots , k$, and let $\V=(v_{1}, \dots, v_{k}).$  For any $p$ in $\Cz,$ since $\B$ is a commuting $m$-tuple, we have 
\begin{equation*}
p(\A)=p(A_1, \dots, A_m)= p(S^{-1}B_{1}S, \dots, S^{-1}B_{m}S)=S^{-1} p(B_1, \dots, B_m) S=S^{-1} p(\B)S.
\end{equation*}
 Therefore,
\begin{equation}\labeq{Poly}
\sum_{j=1}^{k}p_{j}(\A)u_{j}=\sum_{j=1}^{k} S^{-1 }p_{j}(\B)S u_{j}=S^{-1}\sum_{j=1}^{k}p_{j}(\B)v_{j}  \quad \text{ for all } \quad p_j \in \Cz.
\end{equation}
Recalling the definition \refeq{Lu}, it follows from \refeq{Poly} that $\Lv= S \Lu .$  Since $S$ is onto and $\Lu$ is dense in $\X$ by the assumption, it follows that $\Lv$ is dense in $\X$ as well. Thus,  $\V=(v_{1},...,v_{k})$ is a cyclic $k$-tuple for $B$. Moreover, \refeq{Poly}  also implies that
\begin{equation}\labeq{c}
\left\Vert \sum_{j=1}^{k}p_{j}(\A)u_{j}\right\Vert \leq \left\Vert S^{-1} \right\Vert \left\Vert \sum_{j=1}^{k}p_{j}(\B)v_{j}\right\Vert 
\quad \text{ for all } \quad p_j \in \Cz.
\end{equation}
Applying $S$ from the left to \refeq{Poly} and taking the norm, we obtain 
\begin{equation}\labeq{1c}
\frac{1}{\|S\|}\left\Vert \sum_{j=1}^{k}p_{j}(\B)v_{j}\right\Vert \leq \left\Vert \sum_{j=1}^{k}p_{j}(\A)u_{j}\right\Vert 
\quad \text{ for all } \quad p_j \in \Cz.
\end{equation}
Now inequality \refeq{Ineq} follows from \refeq{c} and \refeq{1c} with $c=\max \{\|S\|, \|S^{-1}\|\}.$ This establishes the proof of the necessity part.

Conversely, suppose that $\B$ is a commuting $m$-tuple, and  let $\V=(v_{1},...,v_{k})$ be a cyclic $k$-tuple 
for $\B$ which satisfies \refeq{Ineq}. Define  $\s$ from $\Lu \subseteq \X$ onto $\Lv \subseteq \X$ as
\begin{equation}\labeq{s}
	\s \Bigl( \sum_{j=1}^k p_j(\A) u_j \Bigr) = \sum_{j=1}^k p_j(\B) v_j   \quad \text{ for all } \quad p_j \in \Cz.
\end{equation}
First we show that $\s$ is well-defined. To see this, suppose that $x \in \Lu$ has two representations 
$x= \sum_{j=1}^k p_j(\A) u_j$ and $x=\sum_{j=1}^k q_j(\A) u_j$ for some $p_j, q_j \in \Cz$ and $1\leq j\leq k.$ In other words, 
$\sum_{j=1}^k (p_j(\A)-q_j(\A)) u_j=0.$ Now the first inequality of \refeq{Ineq} implies that  $\sum_{j=1}^k (p_j(\B)-q_j(\B)) v_j=0;$ or equivalently,   $\sum_{j=1}^k p_j(\B) v_j = \sum_{j=1}^k q_j(\B) v_j.$ In view of the definition for $\s,$ the last equality is equivalent to
$\s \, x= \s (\sum_{j=1}^k p_j(\A) u_j) = \s (\sum_{j=1}^k q_j(\A) u_j).$ Thus,  $\s$ is well-defined.
 
Furthermore,  $\s$ is clearly linear. It follows trivially from the first inequality of \refeq{Ineq} that $\s$ is also bounded on $\Lu$ with $\|\s\| \leq c.$ Since 
$\Lu$ is dense in $\X,$ the bounded linear transformation (BLT) theorem implies that $\s$ has a unique norm-preserving extension to $\X;$ that is, there exists a unique $S$ in $\BX$ such that $Sx = \s x$ for all $x$ in $\Lu$ and $\|S|=\|\s\|.$ We prove that $S$ is the desired operator which provides the simultaneous similarity between $\A$ and $\B.$ To show $S$ is invertible, in view of the open mapping theorem, it suffices to prove that $S$ is a bijection. 

To do this, first let $x \in \Lu.$ Then $x$ has the form $x=  \sum_{j=1}^k p_j(\A) u_j$ for some $p_j$ in $\Cz.$ Now, suppose $x \in \Lu$ such that $Sx=0.$ Using the fact that $Sx=\s x,$ the definition of $\s$ together with  the second inequality of \refeq{Ineq} imply that $x=0.$ Since $\Lu$ is dense in $\X,$ it follows from the continuity of $S$ that $\text{Ker\,}(S)=\{0\},$ where $\text{Ker\,}(S)$ stands for the kernel of $S.$ This proves that $S$ is one-to-one. 

Next, let $y \in \X.$ By density of $\Lv,$ there exists a sequence $\{y_n\} \subseteq \Lv$ such that $y_n \to y$ in $\X;$ that is,
$\lim_{n \to \infty} \|y_n -y\|=0,$ where $y_n = \sum_{j=1}^k p_j^{(n)} (\B) v_j $ for some sequence of polynomials 
$\{p_1^{(n)}, \dots, p_k^{(n)}\}.$ Letting $x_n = \sum_{j=1}^k p_j^{(n)} (\A) u_j ,$ we have from the definition of $\s$ that $\s x_n = y_n$ for all $n \geq 1.$ Since $\{y_n\} $ converges in $\X,$ and hence a Cauchy sequence, the second inequality of \refeq{Ineq} implies that 
$\{x_n\}$ is also a Cauchy sequence in $\X.$ Therefore, there exists an $x \in \X$ such that $x_n \to x$ in $\X.$ Now, it follows from the continuity of $S$ that
 \begin{equation}\labeq{Onto}
 Sx=\lim_{n\to\infty} Sx_n = \lim_{n\to\infty} \s \, x_n = \lim_{n\to\infty} y_n = y.
 \end{equation}
Thus, $S$ is also surjective.

It remains to show $\A$ and $\B$ are simultaneously similar. We start by fixing  $x\in \X$ and use the density of $\Lu$ to get a sequence $\{x_n\} \subseteq \Lu$ such that $x_n \to x$ in $\X,$ where 
$x_n = \sum_{j=1}^k p_j^{(n)} (\A) u_j $ for some sequence of polynomials $\{p_1^{(n)}, \dots, p_k^{(n)}\}$ in $\Cz.$  For each fixed $n \geq1$ and  $1\leq j \leq k,$ let $q_{\ell,j}^{(n)} = z_{\ell}p_j^{(n)},$ where $1 \leq \ell \leq m.$ 
Clearly for each fixed $\ell,$ 
\begin{equation*}
A_{\ell} x_n = A_{\ell} \sum_{j=1}^k p_j^{(n)} (\A) u_j  =  \sum_{j=1}^k A_{\ell}  p_j^{(n)} (\A) u_j 
			=  \sum_{j=1}^k  q_{\ell,j}^{(n)} (\A) u_j .
 \end{equation*}
Thus, it follows from the definitions of $\s$ and $q_{\ell,j}^{(n)}$ that
\begin{equation}\labeq{q}
\s A_{\ell} x_n = \s  \sum_{j=1}^k  q_{\ell,j}^{(n)} (\A) u_j =  \sum_{j=1}^k  q_{\ell,j}^{(n)} (\B) v_j =  
B_{\ell}  \sum_{j=1}^k  p_{j}^{(n)} (\B) v_j = B_{\ell} \s x_n.
 \end{equation}
Finally, the equation \refeq{q}, together with a similar argument as in \refeq{Onto}, implies
 \begin{equation}\labeq{Sim}
 SA_{\ell}x=\lim_{n\to\infty} S A_{\ell} x_n = \lim_{n\to\infty} \s A_{\ell} x_n =\lim_{n\to\infty}  B_{\ell} \s x_n = 
 \lim_{n\to\infty}  B_{\ell} S x_n = B_{\ell} S x.
 \end{equation}
Since $x$ is arbitrary, it follows from \refeq{Sim} that $S A_{\ell} = B_{\ell} S$ for all $\ell = 1, \dots, m.$ This finishes the proof of the theorem.
\end{proof}

We make a couple of observations here. First of all, we note that the positive constants $c^{-1}$ and $c$  in \refeq{Ineq} could have been equivalently substituted by two  arbitrary positive constants $c_1$ and $c_2,$ respectively. The given format, however,  makes \refeq{Ineq}  invariant under the substitution of $(\A,\U)$ with $(\B,\V),$ or vice versa. This also agrees with the statement of  \reft{Main}, which  is symmetric with respect to $\A$ and $\B.$ Next, the proof of necessity part of  \reft{Main} clearly shows that if $\A$ and $\B$ are two simultaneously similar $m$-tuples, then $A$ is $k$-cyclic if and only if $\B$ is $k$-cyclic. For instance, if $\U=(u_1, \dots , u_k)$ is a cyclic $k$-tuple for $\A,$ then $\V=(Su_1, \dots , Su_k)$ is a cyclic $k$-tuple for $\B,$ where $\U$ and $\V$ both satisfy \refeq{Ineq}. In general, however, the converse is not true; that is, there are simultaneously similar $k$-cyclic commuting $m$-tuples $\A$ and $\B,$ where the corresponding cyclic $k$-tuples $\U$ and $\V$ do not satisfy \refeq{Ineq}. At the end of the next section, we provide a simple example by utilizing \reft{Finite}.

\section{\bf  The Finite Dimensional Case}
In this section, we assume $\X$ has a finite dimension. Therefore, it may be assumed without loss of generality  that $\X=\C^N$ for some natural number $N.$  It is also clear that an 
$m$-tuple $\A=(A_{1},\ldots ,A_{m})$ in $(\BX)^m$ is now simply an $m$-tuple of $N\times N$ matrices on $\C^N.$ Our main goal here is  \reft{Finite}, where we  show that one can add another very useful equivalent condition to \reft{Main}. This equivalent condition, however, does not hold in the infinite dimensional case. A  simple counterexample will be provided at the end of this section. We start with a general  lemma.
\begin{lemma}\labl{Finite}
 Suppose $\A=(A_{1},\ldots ,A_{m})$ and  $\B=(B_{1},\ldots ,B_{m})$ are two $m$-tuples of  commuting $N\times N$ matrices on $\C^N.$ If   $\U=(u_{1},...,u_{k})$ and $\V=(v_{1},...,v_{k})$  are two arbitrary vectors in $\left(\C^N\right)^k,$ then the following conditions are equivalent.
\begin{enumerate}
\item[(I)] There exists a positive constant $c>0$ such that
\begin{equation*}
c^{-1}\left\Vert \sum_{j=1}^{k}p_{j}(\B)v_{j}\right\Vert \leq \left\Vert
\sum_{j=1}^{k}p_{j}(\A)u_{j}\right\Vert \leq c\left\Vert
\sum_{j=1}^{k}p_{j}(\B)v_{j}\right\Vert,
\end{equation*}%
for all polynomials $p_{j}$ in $\Cz$.
\item[(II)]
$
\left\{p_1, \dots, p_k \in \Cz : \sum_{j=1}^{k}p_{j}(\A)u_{j} =0 \right\} =
\left\{p_1, \dots, p_k \in \Cz : \sum_{j=1}^{k}p_{j}(\B)v_{j} =0 \right\}.
$
In other words if $p_1, \dots, p_k \in \Cz ,$ then 
\begin{equation*}
 \sum_{j=1}^{k}p_{j}(\A)u_{j} =0 \quad \text{if and only if} \quad \sum_{j=1}^{k}p_{j}(\B)v_{j} =0.
\end{equation*}
\end{enumerate}
\end{lemma}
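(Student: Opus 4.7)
The plan is to handle the easy implication (I) $\Rightarrow$ (II) by a one-line contrapositive, and then spend the work on (II) $\Rightarrow$ (I), where finite-dimensionality enters decisively. For the easy direction, I would note that if $\sum_{j} p_j(\A)u_j=0$, then by the upper bound in (I) we get $\sum_{j}p_j(\B)v_j=0$, and by the lower bound the converse holds; so (II) is immediate.

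For (II) $\Rightarrow$ (I), my approach is to reuse the construction of the operator $\mathfrak{L}$ from the proof of \reft{Main}, but in a setting where boundedness is automatic. Specifically, I would define a linear map $\mathfrak{L} : \Lu \to \Lv$ by the formula $\mathfrak{L}\bigl(\sum_{j=1}^k p_j(\A)u_j\bigr) = \sum_{j=1}^k p_j(\B)v_j$. The forward implication in (II) says that if two representations $\sum p_j(\A)u_j$ and $\sum q_j(\A)u_j$ agree, then so do their images, so $\mathfrak{L}$ is well-defined; the reverse implication in (II) gives injectivity of $\mathfrak{L}$; and $\mathfrak{L}$ is obviously surjective onto $\Lv$ by the way its range is parametrized. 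Thus $\mathfrak{L}$ is a linear isomorphism from $\Lu$ onto $\Lv$.

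Here is where finite-dimensionality does all the work: since $\Lu$ and $\Lv$ are linear subspaces of the finite-dimensional space $\C^N$, they are themselves finite-dimensional, and therefore every linear map between them (in either direction) is automatically continuous. Consequently both $\mathfrak{L}$ and $\mathfrak{L}^{-1}$ are bounded, and taking $c = \max\{\|\mathfrak{L}\|,\|\mathfrak{L}^{-1}\|\}$ yields the two-sided estimate (I) on all of $\Lu$, which is exactly the inequality asserted for arbitrary polynomial tuples $p_1,\dots,p_k$.

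I do not anticipate a genuine obstacle here: the argument is essentially an observation about finite-dimensional linear algebra once the correct map is written down. The only place to be careful is the well-definedness step, which must invoke the $\Rightarrow$ direction of (II), not just the $\Leftarrow$ direction; and in the remark following the lemma it will be worth emphasizing that the automatic boundedness of $\mathfrak{L}^{\pm 1}$ is precisely what fails in infinite dimensions, explaining why this equivalence cannot be appended to \reft{Main} in general.
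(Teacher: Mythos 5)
Your proposal is correct and follows essentially the same route as the paper: both define the map $\s$ on $\Lu$ by $\s\bigl(\sum_j p_j(\A)u_j\bigr)=\sum_j p_j(\B)v_j$, use the two directions of (II) for well-definedness and injectivity, exploit finite-dimensionality for boundedness of $\s$ and $\s^{-1}$, and take $c=\max\{\|\s\|,\|\s^{-1}\|\}$. Your explicit appeal to automatic continuity of linear maps between finite-dimensional subspaces is if anything a cleaner justification of the boundedness step than the paper's wording, but it is not a different argument.
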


\begin{proof}
Since all norms on a finite a dimensional space are equivalent, we may assume the norm in (I) is any arbitrary norm on $\C^N.$ Now if $\sum_{j=1}^{k}p_{j}(\A)u_{j} =0$ for some $p_1, \dots, p_k$ in $ \Cz,$ then it follows from the first inequality of (I) that 
$\sum_{j=1}^{k}p_{j}(\B)v_{j} =0,$ and vice versa. Thus (I) implies (II).

To prove (II) implies (I), define $\s$ from $\Lu$ onto $\Lv$ as in \refeq{s}. Using an exact similar argument as in the proof of \reft{Main}, replacing inequality \refeq{Ineq} with the condition (II), one concludes that $\s$ is well defined. Since $\Lu$ and $\Lv$ are now subspaces of $\C^N,$ they are both closed, and thus complete. This implies that $\s$ is a bounded operator from $\Lu$ onto $\Lv$. Moreover if 
$x=\sum_{j=1}^{k}p_{j}(\A)u_{j}$ belongs to $ \Lu$ such that $\s \, x =0;$ or equivalently
$\sum_{j=1}^{k}p_{j}(\B)v_{j}=0$,  then it follows from (II)  that $x=0;$ that is $\text{Ker\,}(S)=\{0\}.$ This shows that $\s$ is also one to one, and thus it is invertible.  Now using a similar argument given in the first part of the proof of \reft{Main}, we obtain the inequalities in (II) with $c=\max \{\|\s\|, \|\s^{-1}\|\}.$
\end{proof}

An immediate consequence of \refl{Finite} when combined with \reft{Main} is the following result for the finite dimensional case.

\begin{theorem}\labt{Finite}
Let $\A=(A_{1},\ldots ,A_{m})$ and  $\B=(B_{1},\ldots ,B_{m})$ be two $m$-tuples of  commuting $N\times N$ matrices on $\C^N.$ 
If $\A$ is $k$-cyclic with respect to the  $k$-tuple $\U=(u_{1},...,u_{k}),$ then the following statements are equivalent.
\begin{enumerate}
\item[(a)] $\B$ is simultaneously similar to $\A.$
\item[(b)] There exists a cyclic $k$-tuple $\V=(v_{1},...,v_{k})$ for $\B$ and  a positive constant $c>0$ such that
\begin{equation*}
c^{-1}\left\Vert \sum_{j=1}^{k}p_{j}(\B)v_{j}\right\Vert \leq \left\Vert
\sum_{j=1}^{k}p_{j}(\A)u_{j}\right\Vert \leq c\left\Vert
\sum_{j=1}^{k}p_{j}(\B)v_{j}\right\Vert,
\end{equation*}%
for all polynomials $p_{j}$ in $\Cz$.
\item[(c)] There exists a cyclic $k$-tuple $\V=(v_{1},...,v_{k})$ for $\B$ such that
\begin{equation*}
\Bigl\{p_1, \dots, p_k \in \Cz : \sum_{j=1}^{k}p_{j}(\A)u_{j} =0 \Bigr\} =
\Bigl\{p_1, \dots, p_k \in \Cz : \sum_{j=1}^{k}p_{j}(\B)v_{j} =0 \Bigr\}.
\end{equation*}
\end{enumerate}
\end{theorem}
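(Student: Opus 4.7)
The plan is to deduce \reft{Finite} as an immediate corollary of \reft{Main} combined with \refl{Finite}, since essentially all the real work has already been done. The equivalence of (a) and (b) is already established by \reft{Main} applied to the special case $\X = \C^N$, so the only task is to prove (b) $\iff$ (c). Note that both (b) and (c) assume the existence of a cyclic $k$-tuple $\V$ for $\B$; the content beyond cyclicity is the norm inequality in (b) versus the matching vanishing sets in (c).

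First, I would establish (b) $\implies$ (c) as follows: assume the inequality in (b) holds for some cyclic $\V$. This is precisely condition (I) of \refl{Finite} applied to the specific vectors $\U$ and $\V$. By the lemma, condition (II) holds, which is exactly the statement of (c) for the same cyclic $\V$. Conversely, for (c) $\implies$ (b), assume the vanishing ideals match for some cyclic $\V$. This is condition (II) of \refl{Finite}, so by the lemma there exists $c > 0$ such that condition (I) holds with this same $\V$, giving exactly the inequality in (b).

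Combining, we obtain (a) $\iff$ (b) $\iff$ (c). No genuine obstacle arises, as the cyclic $k$-tuple $\V$ can be taken to be the same across (b) and (c), allowing a direct application of \refl{Finite} to the pair $(\U, \V)$. The only subtlety worth flagging in the writeup is that the forward direction of \refl{Finite}'s proof uses only the first inequality of (I), while the reverse direction of (II) $\implies$ (I) relies on the finite-dimensionality of $\X$ (through the closedness of $\Lu$ and $\Lv$ and the open mapping theorem); this is precisely why the added equivalence (c) does not carry over to the infinite-dimensional setting, motivating the counterexample alluded to earlier.
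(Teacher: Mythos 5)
Your proposal is correct and follows exactly the paper's route: the paper likewise obtains \reft{Finite} as an immediate consequence of \reft{Main} (giving (a) $\Leftrightarrow$ (b) on $\X=\C^N$) combined with \refl{Finite} (giving (b) $\Leftrightarrow$ (c) for the same cyclic $k$-tuple $\V$). Your closing remark on where finite-dimensionality enters matches the paper's discussion of why (c) fails in the infinite-dimensional setting.
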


As mentioned in the beginning of this section,  condition (c) of \reft{Finite} is not equivalent to simultaneous similarity in infinite dimensional case. Here is a simple example.

\begin{example}\labex{ex1}
Let $(\X,\|\cdot\|)=(H^2,\|\cdot\|_2),$ where $H^2$ denotes the Hardy space of analytic functions on the open unit disk 
$\D=\{z \in \C : |z| <1\}$ defined as (see \cite{PD})
\begin{equation*}
H^2 = \Biggl\{ f : f(z)=\sum_{n=0}^\infty a_n z^n, \ \ z \in \D, \ \ a_n \in \C, \ \  \text{ and } \ \  \|f\|_2^2=\sum_{n=0}^\infty |a_n |^2 < \infty       
\Biggr\}.
\end{equation*}
Let $A=M_z$ be the operator of multiplication by the independent variable $z$ on $H^2$ defined by  $M_z : f(z) \mapsto zf(z).$ Clearly
$A \in \BX.$ Since polynomials are dense in $H^2,$ $A$ is cyclic with the cyclic vector, say, $u=1.$ Next, we set $B=2A=M_{2z}.$ It follows  that $B \in \BX$ and $B$ is also cyclic with respect to the same cyclic vector $u=1.$

Now if $p \in \C[z],$ then it follows from the definition that $p(A)u=p(z).$ Thus, $p(A)u=0$ if and only if $p(z) \equiv 0;$ or equivalently, 
$\{p \in \C[z] : p(A) u=0 \} =\{0\}.$ A similar argument also implies that $\{p \in \C[z] : p(B) u=0 \} =\{0\}.$ Therefore, condition (c) of \reft{Finite} holds. However, $A$ and $B=2A$ are not (simultaneously) similar to each other. For the sake of completeness, we provide a proof here. So suppose that there exists an invertible $S \in \BX$ such that $2A=S A S^{-1.}$ Itteration of the last equality for $n\geq 1$ gives $2^n A^n= S A^n S^{-1}.$ Consequently,
\begin{equation*}
2^n \|A^n\| = \|2^n A^n\| = \|S A^n S^{-1}\| \leq \|S\| \|A^n\| \|S^{-1}\| \quad \text{ for all } n\geq 1.
\end{equation*}
Since $\|A^n\| > 0$ ($n\geq 1$),  the above inequality implies that $\|S\| \|S^{-1}\| \geq 2^n$ for all $n\geq 1,$ which is absurd as both
 $\|S\|$ and $\|S^{-1}\|$ are finite. Therefore, no such $S$ can exist.
\end{example}

We should mention that in the infinite dimensional case, such as in \refex{ex1}, it may very well happen that the condition in part (c) of \reft{Finite} is trivially verified because both sets described in this part
  turn out to be the zero ideal $\{0\}.$ In the finite dimensional case, however, this phenomena can never occur. To see this, recall that by the Cayley-Hamilton theorem every $N\times N$ matrix $A$ satisfies its own characteristic polynomial; that is, there always exists a polynomial $p,$ and hence infinitely many,  such that  $p(A)$ vanishes identically.  This is the underlying reason why the ideal approach in the infinite dimensional case cannot serve as a fruitful strategy. 

We conclude this section with an example of two cyclic (simultaneously) similar operators $A$ and $B$ with a common cyclic $k$-tuple for which the condition \refeq{Ineq} of \reft{Main} does not hold, see the remarks at the end of Section 2.

\begin{example}\labex{2}
Let $\X=\C^2,$ considered  as the usual 2-dimensional vector space over $\C$ with 
$\{e_1,e_2\}= \bigl\{(1,0)^T , (0,1)^T \bigr\}$   as its standard basis. Note also that 
$(e_1,e_2)$ is trivially a cyclic 2-tuple for any $2\times2$-matrix. Consider $A=\begin{pmatrix} 0 & 1 \\ 0 & 0 \end{pmatrix}$ and $B=\begin{pmatrix} -1 & 1 \\ -1 & 1 \end{pmatrix}.$ A routine check shows that $B=SAS^{-1},$ where 
$S=\begin{pmatrix} -1 & 2 \\ -1 & 1 \end{pmatrix}.$ Thus, $A$ and $B$ are  (simultaneously) similar  with a common cyclic $2$-tuple $\U = \V=(e_1,e_2).$ 
Now if we let $p_1(z)=1$ and $p_2(z)=-z,$ then clearly $\sum_{j=1}^{2}p_{j}(A)u_{j} =e_1 - Ae_2=0,$ whereas  
$\sum_{j=1}^{2}p_{j}(B)v_{j} =e_1 - Be_2 \ne 0.$ This shows that condition (c) of \reft{Finite} does not hold. But in  light of \refl{Finite}, conditions (b) and (c) of \reft{Finite} are always equivalent in the finite dimensional case. Thus, \refeq{Ineq} does not hold either. This completes our argument. We make a final remark that the provided trivial example was only possible due to the added equivalence condition in the finite dimensional case, which can be  easily checked. 

\end{example}

\section{\bf  Examples and Illustrations}
We conclude this paper with some applications of the obtained result. As a first application, let $\Omega$ be a non-empty open subset of $\C^m,$ and denote by $(\X(\Omega), \|\cdot\|_\X)$ the Banach space  of holomorphic functions on $\Omega$ for which $\Cz$ is a dense subset. For example, one could consider Dirichlet-type spaces on the $m$-dimensional unit polydisc 
\begin{equation*}
\D^m=\{ (z_1, \dots, z_m) \in \C^m: |z_j| <1, \ \ j=1\dots, m\},
\end{equation*}
which includes the well-known Hardy and  Bergman spaces over the polydiscs and the polydisc algebra $A(\D^m)$, see \cite{WR}. Furthermore  let 
$M_{z_j} $ denote the operator of multiplication by the $j^{th}$-coordinate $z_j$ on $\X(\Omega)$ defined as  
$M_{z_j} : f({\mathbf z}) \mapsto z_j f({\mathbf z}).$ It follows that $\mathbf{M}=(M_{z_j} , \dots, M_{z_m} )$ is a cyclic $m$-tuple of commuting operators on $\X(\Omega)$ with the cyclic vector $1.$ Since $p(\mathbf{M})1= p({\mathbf z})$ for all $p \in \Cz,$ the following result is an immediate consequence of \reft{Main}.
\begin{corollary}
A commuting $m$-tuple  $A=(A_{1},\dots ,A_{m})$ on $\X(\Omega)$ is
simultaneously similar to $\mathbf{M}=(M_{z_1} , \dots, M_{z_m} )$ if and only if $\A$ is cyclic and
there exists a cyclic vector $u$ in $\X(\Omega)$ for $\A$ such that  
\begin{equation*}
c^{-1} \left\Vert    p       \right\|_\X \leq  \left\|p(\A)u \right\|_\X  
 \leq c
\left\Vert    p       \right\|_\X,
\end{equation*}
for some constant $c >0$ and all polynomials $p$ in $\Cz.$
\end{corollary}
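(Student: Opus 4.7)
The plan is to obtain the corollary as a direct specialization of \reft{Main} to the case $k=1$ with $(\mathcal{A},\mathbf{u})$ playing the role of the reference pair $(\mathbf{M},1)$. Before invoking \reft{Main}, I need to verify two facts about $\mathbf{M}=(M_{z_1},\ldots,M_{z_m})$ on $\X(\Omega)$: that it is a commuting $m$-tuple, and that $1$ is a cyclic vector in the sense of \refd{kCyc} with $k=1$.

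Commutativity of $\mathbf{M}$ is immediate, since the coordinate multiplications $M_{z_i}M_{z_j}f=z_iz_jf=M_{z_j}M_{z_i}f$. For cyclicity with respect to $\mathbf{u}=(1)$, observe that for every $p\in\Cz$ the operator $p(\mathbf{M})$ is itself multiplication by $p(\mathbf{z})$, whence
\begin{equation*}
\mathcal{L}_{\mathbf{M}}(1)=\{p(\mathbf{M})\cdot 1:p\in\Cz\}=\{p(\mathbf{z}):p\in\Cz\}=\Cz,
\end{equation*}
which is dense in $\X(\Omega)$ by the standing hypothesis on the space. Thus $\mathbf{M}$ is a $1$-cyclic commuting $m$-tuple with cyclic vector $1$, and moreover $\|p(\mathbf{M})\cdot 1\|_\X=\|p\|_\X$ for every polynomial $p$.

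Now I would apply \reft{Main} with the reference tuple $\mathcal{A}$ of that theorem taken to be $\mathbf{M}$, the cyclic $k$-tuple taken to be $\mathbf{u}=(1)$ (so $k=1$), and the tested tuple $\mathcal{B}$ taken to be the $\mathcal{A}=(A_1,\ldots,A_m)$ of the corollary. \reft{Main} then asserts that $\mathcal{A}$ is simultaneously similar to $\mathbf{M}$ if and only if there exists a $1$-cyclic vector $u$ for $\mathcal{A}$ and a constant $c>0$ such that
\begin{equation*}
c^{-1}\|p(\mathcal{A})u\|_\X\leq\|p(\mathbf{M})\cdot 1\|_\X\leq c\,\|p(\mathcal{A})u\|_\X \quad \text{for all } p\in\Cz.
\end{equation*}
Substituting $\|p(\mathbf{M})\cdot 1\|_\X=\|p\|_\X$ from the previous paragraph yields precisely the two-sided estimate stated in the corollary, and the existence of a cyclic vector for $\mathcal{A}$ is exactly the cyclicity assumption that appears in the corollary's statement.

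There is no genuine obstacle here; the only point that requires a moment's care is the bookkeeping involved in matching the roles of $\mathcal{A}$ and $\mathcal{B}$ in the symmetric statement of \reft{Main} against the asymmetric phrasing of the corollary, and ensuring that $\X(\Omega)$ is a Banach space on which $M_{z_j}$ is bounded (this is implicit in the requirement that $\|\cdot\|_\X$ is a Banach space norm for which $\Cz$ is a dense subspace, combined with the fact that $p(\mathbf{z})\mapsto z_j p(\mathbf{z})$ must extend continuously whenever $\Cz\subseteq\X(\Omega)$; in the standard examples cited—Hardy, Bergman, Dirichlet-type, and polydisc algebras—this is a well-known property).
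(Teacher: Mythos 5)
Your proposal is correct and follows essentially the same route as the paper: the paper also obtains the corollary as an immediate specialization of \reft{Main} with $k=1$, using that $\mathbf{M}$ is a commuting $m$-tuple, that $1$ is a cyclic vector because $\Cz$ is dense in $\X(\Omega)$, and that $p(\mathbf{M})1=p$, so the two-sided estimate of \refeq{Ineq} becomes the stated inequality. Your extra remarks on role-swapping in the symmetric statement and on boundedness of the $M_{z_j}$ are harmless elaborations of what the paper leaves implicit.
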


Our next and final result is concerned with the finite dimensional case generalizing the idea already presented in \refex{2}. So let 
$\X=\C^N$ with the standard basis
\begin{equation*}
e_1=(1, 0, \dots, 0)^T, \quad e_2=(0, 1, \dots, 0)^T, \quad e_N=(0, 0, \dots, 1)^T. 
\end{equation*}
As noted in \refex{2}, $\U=(e_1, \dots, e_N)$ is clearly a cyclic $m$-tuple for any $m$-tuple of commuting $N\times N$ matrices 
$\A=(A_1, \dots, A_m).$ Therefore, as a consequence of \reft{Finite}, we have the following corollary which provides an answer to a "A wild problem in Linear Algebra" for the cyclic case.

\begin{corollary}
Two $m$-tuples of commuting $N\times N$ matrices $%
\A=(A_{1},\dots ,A_{m})$ and $\B=(B_{1},\dots ,B_{m})$ are simultaneously
similar if and only if there exists a basis $\V=(v_{1},...,v_{N})$ for $\C^N$ such that
\begin{equation*}
\Biggl\{p_1, \dots, p_N \in \Cz : \sum_{j=1}^{N}p_{j}(\A)e_{j} =0 \Biggr\} =
\Biggl\{p_1, \dots, p_N \in \Cz : \sum_{j=1}^{N}p_{j}(\B)v_{j} =0 \Biggr\}.
\end{equation*}

\end{corollary}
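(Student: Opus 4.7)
The plan is simply to invoke \reft{Finite} with $k=N$ and $\U=(e_1,\dots,e_N)$. A first routine observation is that the standard basis is always a cyclic $N$-tuple for any commuting $m$-tuple on $\C^N$: taking each $p_j$ to be the constant polynomial $c_j$, the set $\{\sum_{j=1}^{N} c_j e_j : c_j \in \C\}$ already exhausts $\C^N$, so $\Lu = \C^N$. Hence the cyclicity hypothesis of \reft{Finite} is automatically satisfied with this choice of $\U$, regardless of the $m$-tuple $\A$.

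For the forward direction, suppose $\A$ and $\B$ are simultaneously similar via an invertible $S$ with $B_j = S A_j S^{-1}$ for each $j$. Following the necessity argument in the proof of \reft{Main}, the $N$-tuple $\V = (Se_1, \dots, Se_N)$ is a cyclic $N$-tuple for $\B$; since $S$ is a linear isomorphism of $\C^N$, this $N$-tuple is in fact a basis of $\C^N$. The implication (a) $\Rightarrow$ (c) of \reft{Finite} then yields the vanishing-ideal identity for this particular $\V$.

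For the converse, suppose a basis $\V = (v_1, \dots, v_N)$ of $\C^N$ satisfies the ideal identity. The same constant-polynomial argument as above shows that any basis is automatically a cyclic $N$-tuple for $\B$, so the hypothesis on $\V$ required by \reft{Finite}(c) is met. The implication (c) $\Rightarrow$ (a) of \reft{Finite} then delivers the desired simultaneous similarity.

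There is no real obstacle to this plan, since the substance is already contained in \reft{Finite} and \refl{Finite}; the only additional packaging is the interchangeability in $\C^N$ between ``cyclic $N$-tuple'' and ``basis of $\C^N$,'' which is immediate in one direction (basis $\Rightarrow$ cyclic $N$-tuple) and is obtained in the other direction, when needed, by the linear-isomorphism observation used in the forward part. The only mildly non-obvious point is the upgrade from ``cyclic $N$-tuple for $\B$'' to ``basis'' in part (a) $\Rightarrow$ (c), and this is free because we begin with the standard basis $\U = (e_1, \dots, e_N)$ and transport it through the invertible operator $S$.
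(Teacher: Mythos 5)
Your proposal is correct and follows essentially the same route as the paper, which derives the corollary directly from \reft{Finite} after noting that the standard basis $(e_1,\dots,e_N)$ is automatically a cyclic $N$-tuple for any commuting $m$-tuple of $N\times N$ matrices. Your explicit handling of the upgrade from ``cyclic $N$-tuple'' to ``basis'' in the forward direction, via $\V=(Se_1,\dots,Se_N)$, is exactly the detail implicit in the paper's appeal to the necessity argument of \reft{Main}.
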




\begin{thebibliography}{99}



\bibitem{DF}  P. Donovan and M.R. Freislich, {\it Some evidence for an extension of the Brauer-Thrall conjecture,} 
{ Sonderforschungsbereich Theor. Math.,} {\bf 40} (1972),   24--26.

\bibitem{Drozd} Y. A. Drozd, {\it Tame and wild matrix problems,} { Representations and Quadratic Forms,}  (1979), 39--74.

\bibitem{PD} P. L. Duren, {\it Theory of H$^{p}$ Spaces}, Academic Press, New York, 1970.

\bibitem{GelPv69} I.M. Gelfand and V.A. Ponomarev, {\it Remarks on the classification of a pair of commuting linear transformations in a finite-dimensional space,} { Funktsional. Anal. i Prilozhen.,} {\bf 3} (1969) no. 4,  81--82. [English transl.: Functionul Anal. Appl. {\bf 3} (1969), 325--326]

\bibitem{HGK} Michiel Hazewinkel, Nadiya Gubareni, and V. V. Kirichenko, {\it Algebras, rings and modules. Vol. 2,} { Springer}  (2007).

\bibitem{WR} W. Rudin,  {\it Function Theory in Polydisks}, W. A. Benjamin, Inc., New York-Amsterdam, 1969.

\bibitem{BS13} B. Shekhtman, {\it On Cyclicity and Regularity of Commuting Matrices,} {Mathematics and Statistics}  {\bf 1} (2013)  157--161.





\end{thebibliography}
\end{document}